\def\qed{\hfill {\hbox{${\vcenter{\vbox{               
   \hrule height 0.4pt\hbox{\vrule width 0.4pt height 6pt
   \kern5pt\vrule width 0.4pt}\hrule height 0.4pt}}}$}}}
\newtheorem{theorem}{Theorem}
\newtheorem{lemma}[theorem]{Lemma}
\newtheorem{proposition}[theorem]{Proposition}
\theoremstyle{definition}
\newtheorem{example}{Example}
\newtheorem{definition}{Definition}
\newtheorem{remark}{Remark}
\newcommand{\cQ}{\mathcal{Q}}
\newcommand{\Z}{\mathbb{Z}}
\DeclareMathOperator{\Hom}{Hom}
\def\qed{\hfill {\hbox{${\vcenter{\vbox{               
   \hrule height 0.4pt\hbox{\vrule width 0.4pt height 6pt
   \kern5pt\vrule width 0.4pt}\hrule height 0.4pt}}}$}}}
\def\tr{\triangleright}
\date{}
\title{\Large \textbf{Quandle Action Quivers}}
\author{Mason Cai\footnote{Email: mcab2020@mymail.pomona.edu} \and
Sam Nelson\footnote{Email: Sam.Nelson@cmc.edu. Partially supported by Simons Foundation collaboration grant 702597}}
\begin{document}
\maketitle

\begin{abstract}
\textit{Quandle Coloring Quivers} are directed graph-valued invariants of classical
and virtual knots and links associated to finite quandles. Quandle action quivers
are subquivers of the full quandle coloring quiver associated to quandle actions
by elements of the coloring quandle. These quivers provide a categorification
of the quandle counting invariant associated to each element of the quandle. We 
obtain new polynomial invariants called quandle action polynomials from these quivers
as decategorifications.
\end{abstract}

\parbox{5.5in} {\textsc{Keywords:} 

\smallskip

\textsc{2020 MSC:} 57K12}

\section{\large\textbf{Introduction}}\label{I}

\textit{Quandles} are algebraic structures with axioms motivated by the
Reidemeister moves in knot theory. Since their introduction in the early 1980s,
quandles have been used to define invariants of knots, links and other 
topological structures.

Every classical or virtual knot or link has a \textit{fundamental quandle}, 
sometimes called the \textit{knot quandle}, analogous to the fundamental 
group of a topological space, given by a presentation that can be read from a 
diagram of the knot or link. In \cite{J}, Joyce showed that the knot quandle 
is complete invariant of classical knots in the sense that its isomorphism 
class determines the knot complement up to homeomorphism. For virtual knots 
and classical and virtual links, the fundamental quandle is a not complete 
but still very strong invariant, determining other invariants such as the 
Alexander polynomial, the knot group and more.

Since comparing isomorphism classes of objects defined by presentations 
is often difficult, in practice we consider the set of quandle homomorphisms 
from the fundamental quandle to a finite quandle $X$, known as the 
\textit{quandle homset}. In \cite{CN} the homset was categorified to
obtain a \textit{quandle coloring quiver} associated to each subset of 
the set of quandle endomorphisms, yielding quiver-valued invariants of
classical and virtual knots and links. Several new polynomial knot invariants
were defined from these quivers via decategorification. Quiver invariants
related to the quandle coloring quiver have been studied in many recent works;
see \cite{CCN, CN, CN2, SCN, KNS, FN, IN} for more.

In this paper we consider the action of a finite quandle on itself.
Since these actions are endomorphisms, they determine a distinguished set of
quandle coloring quivers. Moreover, iterating quandle actions in a finite 
quandle eventually yields an identify map, which leads to distinguished cycles 
in the quiver.
We exploit this to define \textit{quandle action quivers} and for each
element of a finite quandle obtain a new polynomial knot invariant
we call a \textit{quandle action polynomial}.

The paper is organized as follows. In Section \ref{M} we review quandles and
quandle coloring quivers. In Section \ref{QA} we define the quandle action 
quivers and their associated polynomial knot invariants. We provide
examples to illustrate the computation of the new invariants and collect 
some sample results for sets of classical and virtual knots and links.
We conclude with some questions for future research in Section \ref{Q}.

\section{\large\textbf{Quandles and Quandle Actions}}\label{M}

We begin with brief review of quandles; see \cite{EN} for more.

\begin{definition}[quandles]\label{def: quandles}
A \textit{quandle} is a set $X$ equipped with two binary operations 
$\triangleright, \triangleright^{-1}: X \times X \to X$ satisfying the 
following axioms:
\begin{enumerate}
\item[(i)] \label{ax: q1} For all $ x\in X$, $x\triangleright x=x$ (idempotent)
\item[(ii)] \label{ax: q2} For all $x,y\in X$, $(x\triangleright y) \triangleright^{-1}y=x =(x\triangleright^{-1}y)\triangleright y$  (right-invertible)
\item[(iii)] \label{ax: q3} For all $x,y,z\in X$, $(x\triangleright y)\triangleright z= (x\triangleright z)\triangleright(y\triangleright z)$ (self-distributive)
\end{enumerate}
\end{definition}
These axioms arise from the three Reidemeister moves when viewing the elements 
of $X$ as labels or colors for each arc of a knot or link diagram. Then, the 
$\triangleright $ and $ \triangleright^{-1}$ operations can be interpreted as 
the over-arc acting on the under-arc at a crossing from the right or from 
the left.

\begin{example}
Let $G$ be a group. Then the underlying set $G$ equipped with 
$\triangleright$ defined as conjugation by $y$, $x\triangle y = y^{-1} x y$ and 
with $\tr^{-1}$ defined as conjugation by $y^{-1}$,
$x\triangleright^{-1} y = y x y^{-1}$ is a quandle known as the \textit{conjugation 
quandle} of $G$. 
\end{example}

\begin{example}
Let $\Lambda=\Z[t,t^{-1}]$ and consider a $\Lambda$-module $M$. Then $M$ 
equipped with the operation $x\triangleright y= tx+(1-t)y$ with $x,y\in M$ is 
called an \textit{Alexander quandle.} 
\end{example}

Finite quandles can be represented by their operation tables. Consider the 
following example:

\begin{example}\label{ex: dihedral}
The \textit{dihedral quandle} of order $n$ is a finite quandle defined as 
the set $X=\Z_n$ with the operation $x\triangleright y= 2y-x (\mod n)$. 

    The operation table for the dihedral quandle of order $3$ is shown below.
    \[
    \begin{tabular}{c| c c c}
         $\triangleright$ & 0 & 1 & 2 \\
         \hline
         0 & 0 & 2 & 1 \\
         1 & 2 & 1 & 0 \\
         2 & 1 & 0 & 2
    \end{tabular}
    \]
\end{example}

\begin{definition}
Let $L$ be an oriented classical or virtual knot or link. At each crossing of $L$, 
assign the following relation between arc labels:
\[\includegraphics{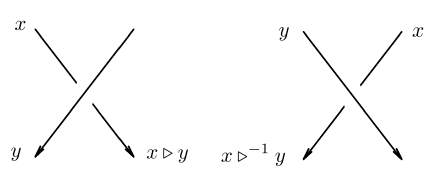}\]

In other words, we can think of the relation as $x$ goes under $y$ to yield 
$x\tr y$. The \textit{fundamental quandle} of $L$, denoted $\cQ(L)$, is the 
quandle generated by the set of arc labels under the equivalence relation
defined by the quandle axioms together with the crossing relations for each 
crossing of $L.$
\end{definition}

\begin{definition}
Let $L$ be an oriented classical or virtual knot or link. Let $X$ be a 
finite quandle. The \textit{counting invariant}, denoted  
\[\Phi_X^{\mathbb{Z}}(L)=|\Hom (\cQ(L),X)|,\] is the 
number of quandle homomorphisms from the fundamental quandle $\cQ(L)$ to 
the coloring quandle $X$. 
\end{definition}

We then have the following standard result (see \cite{EN,J}):

\begin{theorem}
The quandle counting invariant is an invariant of oriented classical and 
virtual knots and links.
\end{theorem}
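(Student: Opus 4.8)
The plan is to reduce the claim to the invariance of the number of \emph{quandle colorings} of a diagram under the classical (and, in the virtual case, virtual) Reidemeister moves. First I would identify the homset $\Hom(\cQ(L),X)$ with the set of quandle colorings of a fixed diagram $D$ of $L$. Because $\cQ(L)$ is presented with one generator per arc of $D$ and, at each crossing, a relation setting the outgoing under-arc equal to $x\tr y$ (where $x$ and $y$ are the colors of the incoming under-arc and the over-arc), a quandle homomorphism $f:\cQ(L)\to X$ is completely determined by the elements of $X$ it assigns to the arc-generators, and such an assignment extends to a homomorphism precisely when it satisfies the crossing relation at every crossing. Thus $\Phi_X^{\Z}(L)=|\Hom(\cQ(L),X)|$ equals the number of maps from the arcs of $D$ to $X$ respecting the rule that an under-arc colored $x$ passing beneath an over-arc colored $y$ continues as $x\tr y$.

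Next I would show this coloring count is unchanged by each Reidemeister move by exhibiting, for every move, a bijection between colorings of the two local diagrams that fixes the colors on the arcs leaving the region where the move is performed. Since colors outside the affected region are untouched, these local bijections assemble into a global bijection of colorings, yielding equality of the counts. Each move corresponds to exactly one quandle axiom: Reidemeister I reduces the outgoing color of a kink on an $a$-colored strand to $a\tr a=a$, matching by idempotency (axiom (i)); Reidemeister II composes a crossing with its inverse so that a strand colored $a$ returns to color $a$ via $(a\tr b)\tr^{-1}b=a$, matching by right-invertibility (axiom (ii)); and Reidemeister III relates the two colorings of the triple-point region, in which the lowest strand acquires color $(a\tr b)\tr c$ on one side and $(a\tr c)\tr(b\tr c)$ on the other, matching by self-distributivity (axiom (iii)).

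For virtual links I would further verify the purely virtual Reidemeister moves and the mixed (detour) move. The key point here is that a virtual crossing imposes no relation in $\cQ(L)$: the two strands simply pass through one another, so the arcs entering and leaving a virtual crossing carry equal colors, and colorings transport unchanged across any virtual or detour move. The main obstacle, and the only genuinely computational part, is the bookkeeping for Reidemeister III: one must track the colors of all three strands through the triple point for each choice of crossing signs and orientations and confirm that axiom (iii), together with the variants obtained using axiom (ii), furnishes the required bijection in every case. Once this local verification is complete, combining the classical and virtual cases shows that $\Phi_X^{\Z}$ is preserved by every move generating equivalence of classical and virtual link diagrams, which establishes that it is an invariant.
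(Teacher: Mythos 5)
Your proposal is correct and is precisely the standard argument: the paper itself omits the proof, simply citing \cite{EN,J} for this well-known result, and the identification of $\Hom(\cQ(L),X)$ with diagram colorings followed by local bijections for each Reidemeister move (with axioms (i), (ii), (iii) matched to moves I, II, III, and virtual/detour moves imposing no relations) is exactly the argument those references contain. The only point deserving care, which you already flag, is the orientation/sign bookkeeping for the oriented moves, where one checks a generating set of oriented moves using $\tr^{-1}$ as needed.
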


\begin{remark}
A choice of diagram $D$ for an oriented knot or link $L$ allows us to represent
the homset $\mathrm{Hom}(\cQ(L),X)$ in a concrete way as the set of 
$X$-colorings
of $D$, analogously to how a choice of basis for a vector space enables us to 
represent linear transformations concretely as matrices. A different choice of
diagram yields a different set of representatives for each coloring just as a 
different choice of basis yield different matrices represent the same linear 
transformation. The role of change of basis matrices is played by $X$-colored
Reidemeister moves in this analogy.
\end{remark}

\begin{definition}\label{def: action}
Let $L$ be an oriented knot or link, $D$ a choice of diagram of $L$ with arcs 
numbered $1,\dots, n$ and $X$ a finite quandle and $x\in X$. We can represent 
a homset element
as a vector $f=(x_1,\dots,x_n)\in X^n$ with reference to $D$. Then there is a  
natural \textit{quandle action} of $x$ on the homset $\Hom(\cQ(L),X)$  
defined on elements $f=(x_1,x_2,\dots,x_n)\in \Hom(\cQ(L),X)$ by 
\[x\cdot f=( x_1 \triangleright x, x_2 \triangleright x,\dots, 
x_n \triangleright x).\]
\end{definition}

\begin{lemma}
Let $L,X$ be the same as in Definition \ref{def: action}. Then the natural 
quandle action on $f$ yields a valid coloring for every $f\in \Hom(\cQ(L),X)$.  
\end{lemma}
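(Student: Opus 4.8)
The plan is to show that the crossing relations defining a valid coloring are preserved by the action, which reduces to a single application of the self-distributivity axiom. First I would recall that, with respect to the chosen diagram $D$, a vector $f=(x_1,\dots,x_n)\in X^n$ lies in $\Hom(\cQ(L),X)$ precisely when the arc labels satisfy the crossing relation at every crossing: at a crossing where the arc labeled $x_a$ passes under the arc labeled $x_b$ to become the arc labeled $x_c$, we have $x_a\triangleright x_b=x_c$ (with the analogous $\triangleright^{-1}$ relation at crossings of the opposite sign).

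Next I would observe that the action of $x$ is exactly postcomposition by the right-translation map $\beta_x\colon X\to X$, $\beta_x(y)=y\triangleright x$, applied to each coordinate. Thus it suffices to check that $\beta_x$ carries each crossing relation to a valid one. Fix a crossing with $x_a\triangleright x_b=x_c$. After acting, the three relevant arcs carry the labels $x_a\triangleright x$, $x_b\triangleright x$, and $x_c\triangleright x$, and we must verify
\[
(x_a\triangleright x)\triangleright(x_b\triangleright x)=x_c\triangleright x.
\]
But axiom (iii) gives $(x_a\triangleright x_b)\triangleright x=(x_a\triangleright x)\triangleright(x_b\triangleright x)$, and since $x_c=x_a\triangleright x_b$ the right-hand side equals $x_c\triangleright x$, as required. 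Hence the crossing relation holds for $x\cdot f$ at every crossing.

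The only point requiring care is the treatment of crossings whose relation is phrased using $\triangleright^{-1}$. Here I would note that $\beta_x$, which is a bijection by the right-invertibility axiom (ii), also satisfies $\beta_x(y\triangleright^{-1}z)=\beta_x(y)\triangleright^{-1}\beta_x(z)$: writing $w=y\triangleright^{-1}z$ so that $w\triangleright z=y$ and applying the $\triangleright$-compatibility just established yields $\beta_x(w)\triangleright\beta_x(z)=\beta_x(y)$, whence $\beta_x(w)=\beta_x(y)\triangleright^{-1}\beta_x(z)$. Thus $\beta_x$ is a quandle endomorphism of $X$, and since the composite of a quandle homomorphism $\cQ(L)\to X$ with an endomorphism of $X$ is again a homomorphism, $x\cdot f=\beta_x\circ f$ lies in $\Hom(\cQ(L),X)$. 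I do not expect a genuine obstacle here; the content of the lemma is really the self-distributivity axiom repackaged as the statement that right translations are quandle endomorphisms, with the $\triangleright^{-1}$ crossings being the one bookkeeping subtlety.
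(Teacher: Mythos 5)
Your proof is correct and uses exactly the same key idea as the paper, namely that self-distributivity makes the right translation $\beta_x(y)=y\triangleright x$ a quandle endomorphism preserving the crossing relations; the paper's own proof is just a one-line citation of that axiom, so your write-up (including the bookkeeping for $\triangleright^{-1}$ crossings) is a fuller version of the same argument.
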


\begin{proof}
        
    The action of $x$ on the $f$ preserves crossing relations of $L$. This 
follows from the self-distributivity axiom 
\[(x\triangleright z) \triangleright(y\triangleright z)
=(x\triangleright y)\triangleright z.\]
\end{proof}

Quandle actions on the homset induce endomorphisms of 
$X$. Thus, the natural quandle action of any element $x\in X$ yields a quandle 
coloring quiver, $Q_X^x(L)$ as introduced in \cite{CN}. We review 
this definition in the next section.

\section{\large\textbf{Quiver Action Enhancement}}\label{QA}

\textit{Quandle coloring quivers}, $\cQ_X^S(L)$ were defined in 
\cite{CN} in the following way: given a finite quandle $X$ and 
set $S\subset \mathrm{Hom}(X, X)$ of quandle endomorphisms, we can make 
a directed graph 
with a vertex for each $X$-coloring of $L$ and a directed edge from $v_j$ to 
$v_k$ whenever $v_k=f(v_j)$ in the sense that each arc color in $v_k$ is 
obtained from the corresponding arc color in $v_j$ by applying $f$ for
some $f \in S$. For this paper, we specialize the quandle coloring quiver 
to the notion of quandle actions to define the \textit{quandle action quiver.}

\begin{definition}
Let $X$ be a finite quandle and $L$ an oriented classical or virtual knot 
or link represented by a diagram $D$. The \textit{quandle action quiver} 
$\mathcal{Q}_X^A(L)$ has a vertex for each element 
$v\in\mathrm{Hom}(\mathcal{Q}(L),X)$ 
represented by an $X$-coloring of $D$ and for each $x\in X$ an edge 
weighted by $x$ from $v$ to $x\cdot v$, the vertex associated to the
$X$-coloring of $D$ obtained from $v$ by replacing each color $y$ with
$y\tr x$.
\end{definition}

\[\includegraphics{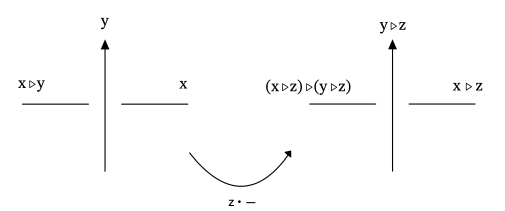}\]

\begin{example}\label{ex:triv}
Let $X$ be the trivial quandle structure on $\{1,2,\dots, n\}$, i.e. $x\tr y=x$
for all $x,y\in X$. Then the homset consists of $n$ monochromatic colorings and 
each element of $X$ acts trivially on these colorings, so the quiver is a
is a disjoint union of $n$ loops.
\end{example}

Let us abbreviate the $n$-times iterated action of $x$ on $v$ as $x \cdot^n v.$

\begin{definition}
For each $x\in X$, the polynomial $\Phi_{X}^{A,x}(L)$ 
\[\Phi_{X}^{A,x}(L)=\sum_{v\in \mathrm{Hom}(\mathcal{Q}(L),X)} u^{l(v,x)}\]
where $l(v,x)$ is the smallest integer $n>0$ such that $x\cdot^n v=v$,
i.e., the length of the loop based at $v$ consisting of edges labeled
with $x$, is the \textit{quandle action polynomial} of $L$ associated
to the element $x\in X$.
\end{definition}

\begin{theorem}
Let $X$ be the trivial quandle on $n$ elements as defined in Example 
\ref{ex:triv}. Then for every $x\in X$ we have $\Phi_X^{A,x}(L)=nu$.
\end{theorem}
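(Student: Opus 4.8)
The plan is to compute $\Phi_X^{A,x}(L)$ directly from the definitions, using the explicit description of the trivial quandle's homset and action that was already established in Example \ref{ex:triv}.

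First I would recall that for the trivial quandle $X$ on $n$ elements, Example \ref{ex:triv} tells us that the homset $\mathrm{Hom}(\mathcal{Q}(L),X)$ consists of exactly $n$ monochromatic colorings, one for each element of $X$. Indeed, since $y \triangleright z = y$ for all $y,z \in X$, every crossing relation $x_i \triangleright x_j = x_k$ reduces to $x_i = x_k$, forcing all arcs of any valid coloring to share a single color; conversely each of the $n$ constant colorings is trivially valid. Thus $|\mathrm{Hom}(\mathcal{Q}(L),X)| = n$, independent of $L$.

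Next I would analyze the action of a fixed element $x \in X$ on an arbitrary vertex $v = (y,y,\dots,y)$. By Definition \ref{def: action}, the action replaces each color $y$ by $y \triangleright x$, and by triviality $y \triangleright x = y$. Hence $x \cdot v = v$ for every vertex $v$; the action is the identity on the entire homset. It then follows immediately that the smallest positive integer $n$ with $x \cdot^n v = v$ is $l(v,x) = 1$ for every $v$, since already $x \cdot^1 v = v$. In quiver terms, every vertex carries a loop of length one labeled by $x$.

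Finally I would assemble the polynomial: substituting $l(v,x) = 1$ into the defining sum gives
\[
\Phi_X^{A,x}(L) = \sum_{v \in \mathrm{Hom}(\mathcal{Q}(L),X)} u^{l(v,x)} = \sum_{v} u^1 = |\mathrm{Hom}(\mathcal{Q}(L),X)|\, u = nu,
\]
which holds for every choice of $x \in X$. There is no real obstacle here, as the triviality of both the homset structure and the quandle operation collapses the computation; the only point requiring a word of care is confirming that the minimal loop length is $1$ rather than $0$, which is handled by the strict inequality $n > 0$ in the definition of $l(v,x)$.
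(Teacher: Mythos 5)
Your proof is correct and follows essentially the same route as the paper, which simply cites Example \ref{ex:triv}: the homset consists of the $n$ monochromatic colorings, every $x\in X$ acts as the identity on them, so each vertex contributes $u^1$ and the sum is $nu$. You have merely written out in full the details that the paper leaves implicit, including the worthwhile observation that the minimality condition $n>0$ forces $l(v,x)=1$.
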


\begin{proof}
The follows from Example \ref{ex:triv}.
\end{proof}

More generally, we have:

\begin{theorem}
Let $X$ be a quandle with an element $x\in X$ which acts trivially on $X$, i.e.
such that $y\tr x=y$ for all $y\in X$. Then $\Phi_X^{A,x}(L)=\Phi_X^{\mathbb{Z}(L)}u$.
\end{theorem}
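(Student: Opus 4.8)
The plan is to show that under the hypothesis that $x$ acts trivially on $X$, every vertex of the quiver $\cQ_X^A(L)$ carries a loop of length one, so that the exponent $l(v,x)$ is identically equal to $1$ and the action polynomial collapses to $u$ times the number of colorings.

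First I would unwind the action on an arbitrary coloring. Writing a homset element as $v = (x_1, \ldots, x_n) \in \Hom(\cQ(L), X)$ with respect to the chosen diagram, Definition~\ref{def: action} gives
\[
x \cdot v = (x_1 \tr x,\, x_2 \tr x,\, \ldots,\, x_n \tr x).
\]
The hypothesis $y \tr x = y$ for all $y \in X$ applies coordinatewise, so each $x_i \tr x = x_i$ and hence $x \cdot v = v$ for every $v$. Consequently $n = 1$ already satisfies $x \cdot^n v = v$, and since $1$ is the least positive integer we conclude $l(v,x) = 1$ for every $v \in \Hom(\cQ(L), X)$. This also exhibits the quiver as a disjoint union of single loops, one based at each vertex, in parallel with Example~\ref{ex:triv}.

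Finally I would substitute into the definition of the action polynomial and invoke the definition of the counting invariant:
\[
\Phi_X^{A,x}(L) = \sum_{v \in \Hom(\cQ(L), X)} u^{l(v,x)} = \sum_{v \in \Hom(\cQ(L), X)} u = \bigl|\Hom(\cQ(L), X)\bigr|\, u = \Phi_X^{\mathbb{Z}}(L)\, u.
\]
There is no genuine obstacle here; the only point requiring any care is verifying that the trivial action fixes each coloring exactly, rather than merely permuting colorings among themselves, which the coordinatewise computation above settles.
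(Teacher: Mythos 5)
Your proof is correct. The paper actually gives no proof of this theorem at all (it only proves the special case of the trivial quandle by citing Example~\ref{ex:triv}), and your coordinatewise computation showing $x\cdot v=v$, hence $l(v,x)=1$ for every coloring, is exactly the intended argument; you also implicitly correct the statement's typo by reading $\Phi_X^{\mathbb{Z}(L)}$ as the counting invariant $\Phi_X^{\mathbb{Z}}(L)$.
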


The following example demonstrates how to compute the quandle action polynomial 
for $3_1$ where the coloring quandle $X$ is the dihedral quandle of order 3.

\begin{example}
    Let $X$ be the dihedral quandle of order 3 as in Example \ref{ex: dihedral} 
and let $L$ be the trefoil knot $3_1.$ The homset is 
\[\includegraphics[width=0.5\textwidth]{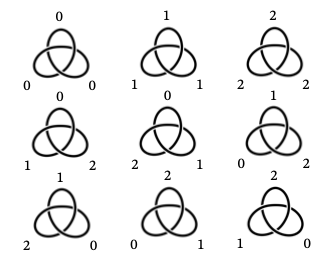}\]
and the quandle action quiver is
\[\includegraphics[width=0.35\textwidth]{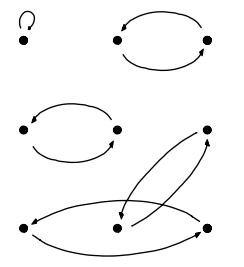}.\]
Then we compute $\Phi_{X}^{A,0}(3_1)=8u^2+u$. 
\end{example}

Since the quandle action polynomial arises from the quandle coloring quiver, it is 
necessarily an invariant of knots and links. 
\begin{theorem}
    Let $X$ be a finite quandle, $L$ an oriented classical or virtual knot or link. 
Let $X$ act naturally on $\Hom(\cQ(L),X)$. Then for each $x\in X$, 
$\Phi_{X}^{A,x}(L)$ is an invariant of $L.$
\end{theorem}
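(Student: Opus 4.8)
The plan is to reduce the statement to the invariance of quandle coloring quivers established in \cite{CN}. Fix $x \in X$ and consider the map $f_x : X \to X$ defined by $f_x(y) = y \tr x$. First I would check that $f_x$ is a quandle endomorphism: this is exactly the self-distributivity axiom, since $f_x(y \tr z) = (y \tr z)\tr x = (y \tr x)\tr(z \tr x) = f_x(y)\tr f_x(z)$. Hence the singleton $S = \{f_x\} \subset \Hom(X,X)$ determines a quandle coloring quiver $\cQ_X^{\{f_x\}}(L)$ in the sense of \cite{CN}, and the subquiver of $\cQ_X^A(L)$ consisting of all the vertices together with only the edges labeled $x$ is precisely this quiver. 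By the main result of \cite{CN}, the directed-graph isomorphism class of $\cQ_X^{\{f_x\}}(L)$ is an invariant of $L$: a sequence of $X$-colored Reidemeister moves relating two diagrams of $L$ induces a bijection between their homsets that intertwines the action of $f_x$, hence a directed graph isomorphism between the associated coloring quivers.

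Next I would verify that the exponent $l(v,x)$ appearing in $\Phi_X^{A,x}(L)$ is well defined and can be read off from the graph structure. Because $\tr$ is right-invertible (axiom (ii)), $f_x$ is a bijection of $X$, so the induced map $v \mapsto x \cdot v$ is an injection of the finite set $\Hom(\cQ(L),X)$ into itself and therefore a permutation. Consequently every vertex of $\cQ_X^{\{f_x\}}(L)$ lies on a unique directed cycle, and $l(v,x)$ is simply the length of that cycle; in particular the smallest $n > 0$ with $x \cdot^n v = v$ exists.

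Finally I would assemble the argument. The polynomial $\Phi_X^{A,x}(L) = \sum_v u^{l(v,x)}$ is determined by the multiset of cycle lengths of the permutation $v \mapsto x \cdot v$, equivalently by the multiset of lengths of directed cycles of $\cQ_X^{\{f_x\}}(L)$. Any directed-graph isomorphism carries cycles to cycles of equal length, so this multiset is preserved by the isomorphisms produced in the first step; hence $\Phi_X^{A,x}(L)$ is unchanged under Reidemeister moves and is an invariant of $L$. There is no serious obstacle here, since the Reidemeister invariance is inherited wholesale from \cite{CN}; the only point requiring care is the decategorification step, namely confirming that passing from the quiver to the cycle-length multiset factors through the directed-graph isomorphism class, which is immediate once one records that $f_x$ is a bijective endomorphism.
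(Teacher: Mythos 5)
Your proof is correct and follows essentially the same route as the paper, which simply notes that the polynomial arises from the quandle coloring quiver of \cite{CN} and is therefore inherited as an invariant; you spell out the reduction that the paper leaves implicit. Your additional observation that right-invertibility makes $f_x$ a bijection, so that $v\mapsto x\cdot v$ is a permutation of the finite homset and $l(v,x)$ is well defined, is a worthwhile detail the paper's one-line proof omits.
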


\begin{proof}
$\Hom(\cQ(L),X)$ is an invariant of $L$ and $\Phi_{X}^{A,x}(L)$ is determined up to 
isomorphism of $X$. 
\end{proof}

Examples \ref{ex: prop_enhance_1} and \ref{ex: prop_enhance_2} verify that the 
quandle action polynomial is a proper enhancement of the counting invariant.
\begin{example}\label{ex: prop_enhance_1}
    Let $X$ be the quandle with multiplication table:
\[
    \begin{tabular}{c| c c c c  }
         $\triangleright$ & 1 & 2 & 3 & 4  \\
         \hline
         1 & 1 & 4 & 4 & 1 \\
         2 & 3 & 2 & 2 & 3 \\
         3 & 2 & 3 & 3 & 2 \\
         4 & 4 & 1 & 1 & 4 \\
         
    \end{tabular}
\]
Then we compute the quandle action polynomials for some small links:
\[
\begin{tabular}{c|c}
    $L$ & $\Phi_{X}^{A,4}(L)$ \\
     \hline
     L4a1 & $12u^2+4u$ \\
     L5a1 & $12u^2+4u$ \\
     L6a1 & $12u^2+4u$ \\
     L6a5 & $8u^2+8u$ \\
     L6n1 & $8u^2+8u$ \\
     L7a1 & $12u^2+4u$ \\
     L7a2 & $12u^2+4u$ \\
     L7a3 & $12u^2+4u$ \\
     L7a4 & $12^2u+4u$ \\
     L7a7 & $8u^2+8u$ \\
     L7n1 & $12^2u+4u$ \\
     L7n2 & $12u^2+4u$ \\
     \end{tabular}
\]

The counting invariant yields the same value of $16$ for all the links above, but 
the quandle action polynomial distinguishes many pairs of links. This verifies that 
the quandle action polynomial is indeed a proper enhancement of the counting invariant. 
\end{example}

\begin{example}\label{ex: prop_enhance_2}
Let $X$ be the quandle with multiplication table
\[
    \begin{tabular}{c| c c c c c c }
         $\triangleright$ & 1 & 2 & 3 & 4 & 5 & 6 \\
         \hline
         1 & 1 & 1 & 2 & 2 & 2 & 2 \\
         2 & 2 & 2 & 1 & 1 & 1 & 1 \\
         3 & 3 & 3 & 3 & 6 & 4 & 5 \\
         4 & 4 & 4 & 5 & 4 & 6 & 3\\
         5 & 5 & 5 & 6 & 3 & 5 & 4\\
         6 & 6 & 6 & 4 & 5 & 3 & 6
    \end{tabular}
\]
We compute the quandle action polynomials for some small links:
\[
\begin{tabular}{c|c}
    $L$ & $\Phi_{X}^{A,4}(L)$ \\
     \hline
L4n1 & $12u^6 + 15u^3 + 8u^2 + u$\\
L6n5 & $18u^6 + 3u^3 + 14u^2 + u$\\ 
L7n1 & $12u^6 + 15u^3 + 8u^2 + u$\\ 
L7n4 & $12u^6 + 15u^3 + 8u^2 + u$\\ 
\end{tabular}
\]
and find that while the quandle action polynomial distinguishes L4a1 
from L6a5, L7a1, and L7a4 while the counting invariant for each is $36.$ 
\end{example}

\begin{remark}
The quandle action polynomial is an infinite family of invariants of knots 
and links. For each choice of finite coloring quandle, a fixed acting element 
yields a set of quandle action polynomials valued at each knot. A different 
acting element may yield a different set of quandle action polynomials. 
\end{remark}

More precisely, we have:
\begin{proposition}
Let $X$ be a quandle. If the action of $x\in X$ agrees with the action
of $y\in X$, i.e. if $z\tr x=z\tr y$ for all $z\in X$, then 
\[\Phi_{X}^{X,x}(L) =\Phi_{X}^{X,y}(L).\]
\end{proposition}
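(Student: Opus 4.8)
The plan is to reduce everything to the single observation that the $n$-fold action $x\cdot^n v$ depends on $x$ only through the one-variable map $\beta_x\colon X\to X$ defined by $\beta_x(z)=z\triangleright x$. The hypothesis $z\triangleright x=z\triangleright y$ for all $z\in X$ says exactly that $\beta_x=\beta_y$ as functions on $X$, and once the two actions coincide the cycle lengths $l(v,x)$ and $l(v,y)$ must agree vertex by vertex, forcing the defining sums to be identical. (I read $\Phi_X^{X,x}$ in the statement as the quandle action polynomial $\Phi_X^{A,x}$ of the Definition, whose notation I will follow.)

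First I would unwind Definition \ref{def: action}: the action of $x$ on a coloring $v=(x_1,\dots,x_n)$ is simply $\beta_x$ applied coordinatewise, $x\cdot v=(\beta_x(x_1),\dots,\beta_x(x_n))$. Since $\beta_x=\beta_y$, this gives $x\cdot v=y\cdot v$ for every $v\in\Hom(\cQ(L),X)$. I would then promote this to the iterated actions by induction on $n$: the case $n=1$ is the equality just established, and for the inductive step I set $w=x\cdot^n v$, use the inductive hypothesis $x\cdot^n v=y\cdot^n v$ to identify $w$ as a single common coloring, and apply the $n=1$ statement to $w$ to obtain $x\cdot^{n+1}v=x\cdot w=y\cdot w=y\cdot^{n+1}v$. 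Thus $x\cdot^n v=y\cdot^n v$ for all $n\ge 1$ and all $v$.

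Finally, because the two iterated actions agree for every $n$, the set of positive integers $n$ with $x\cdot^n v=v$ coincides with the set of those with $y\cdot^n v=v$; taking minima yields $l(v,x)=l(v,y)$ for each vertex $v$. Substituting into $\Phi_X^{A,x}(L)=\sum_{v}u^{l(v,x)}$ shows the two polynomials are equal term for term, which is the claim. I do not expect a genuine obstacle here: the entire content is recognizing that the action factors through $\beta_x$, after which the argument is a routine induction and a comparison of orbit lengths. The only point needing a little care is the inductive step, where one must feed the common intermediate coloring $w$ back into the base equality rather than attempting to compare iterates of $\beta_x$ and $\beta_y$ directly.
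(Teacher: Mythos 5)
Your proof is correct and is exactly the argument the paper has in mind when it says the result is ``immediate from the definition'': the action factors through the map $z\mapsto z\tr x$, so equal columns give equal actions, equal iterated actions, equal loop lengths, and hence equal polynomials. You have simply written out the routine induction that the paper leaves implicit.
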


\begin{proof}
Immediate from the definition.
\end{proof}

\begin{example}
Let $X$ be the quandle structure on the set $\{1,2,3,4,5\}$ given by the
operation table
\[\begin{array}{r|rrrrr}
\tr & 1 & 2 & 3 & 4 & 5 \\ \hline
1 & 1 & 3 & 1 & 3 & 3 \\
2 & 5 & 2 & 5 & 2 & 2 \\
3 & 3 & 1 & 3 & 1 & 1 \\
4 & 2 & 4 & 2 & 4 & 4 \\
5 & 4 & 5 & 4 & 5 & 5
\end{array}.
\]
The actions of 1 and 3 are the same since their columns in the operation 
matrix agree; similarly, the actions of 2, 4 and 5 are the same.
We computed the quandle action polynomials for elements $1,2\in X$
for the prime classical links with up to seven crossings; the results are 
in the table.
\[\begin{array}{c|cc}
L & \Phi_{X}^{X,1}(L) & \Phi_{X}^{X,2}(L)  \\ \hline
L2a1 & 4u+9u^3 & 9u+4u^2 \\
L4a1 & 4u+9u^3 & 9u+4u^2 \\
L5a1 & 4u+21u^3 & 9u+16u^2 \\
L6a1 & 4u+9u^3 & 9u+4u^2 \\
L6a2 & 4u+9u^3 & 9u+4u^2 \\
L6a3 & 4u+9u^3 & 9u+4u^2 \\
L6a4 & 8u+117u^3 & 27u+98u^2 \\ 
L6a5 & 8u+27u^3 & 27u+8u^2 \\
L6n1 & 8u+27u^3 & 27u+8u^2 \\
L7a1 & 4u+21u^3 & 9u+16u^2 \\
L7a2 & 4u+9u^3 & 9u+4u^2 \\
L7a3 & 4u+21u^3 & 9u+16u^2 \\
L7a4 & 4u+21u^3 & 9u+16u^2 \\
L7a5 & 4u+9u^3 & 9u+4u^2 \\
L7a6 & 4u+9u^3 & 9u+4u^2 \\
L7a7 & 8u+27u^3 & 27u+8u^2 \\
L7n1 & 4u+9u^3 & 9u+4u^2 \\
L7n2 & 4u+21u^3 & 9u+16u^2 
\end{array}.\]
\end{example}

We conclude with an observation.

\begin{remark}
Most decategorifications from quandle coloring quivers lose some
information so that the isomorphism class of the quiver is a stronger
invariant than the polynomial. In the case of quandle action quivers
and polynomials, however, no information in lost in going from the 
quiver to the polynomial; the polynomial determines the quiver up to
graph isomorphism.
\end{remark}

\section{\large\textbf{Questions}}\label{Q}

We end with some questions for future research. 

\begin{itemize}
\item What is the most effective way to combine the quandle action polynomials 
for different elements in a quandle into a single invariant?
\item What other enhancements can be defined from the quandle action quiver?
\item Characterize what kinds of polynomials can be the quandle action 
polynomial of a knot or link with respect to a given quandle. For example,
the coefficient of $u^j$ must be a multiple of $j$ since each of the $j$ 
vertices in a cycle of length $j$ contributes $u^j$ to the polynomial; what 
other properties must the polynomials have?
\item Fully characterize the conditions under which two quandle elements
determine the same quandle action quiver/polynomial.
\end{itemize}


\bibliography{mcai-sn}{}
\bibliographystyle{abbrv}

\bigskip

\noindent
\textsc{Department of Mathematics and Statistics\\
Pomona College \\
333 N. College Way\\
Claremont, CA 91711
}

\medskip

\noindent
\textsc{Department of Mathematical Sciences \\
Claremont McKenna College \\
850 Columbia Ave. \\
Claremont, CA 91711}

\end{document}